\newcommand {\col}{{\mathrm{col}}}
\newfont{\pseudocode}{cmtt10}
\newcommand{\pd}{{\partial}}
\newcommand{\Real}{\mathbb{R}}
\newcommand {\ess}{{\mathrm{ess}}}
\renewcommand{\Real}{\mathbb{R}}
\renewcommand{\Real}{\mathds{R}}
\newcommand{\rank}[1]{\mathrm{rank}\left(#1\right)}
\renewcommand{\col}[1]{\mathrm{col}\left(#1\right)}
\newcommand{\beq}{\begin{equation}}
\newcommand{\eeq}{\end{equation}}
\newcommand{\bbm}{\begin{bmatrix}}
\newcommand{\ebm}{\end{bmatrix}}
\newcommand{\bpm}{\begin{pmatrix}}
\newcommand{\epm}{\end{pmatrix}}
\newcommand{\bit}{\begin{itemize}}
\newcommand{\eit}{\end{itemize}}
\newcommand{\ben}{\begin{enumerate}}
\newcommand{\een}{\end{enumerate}}
\newcommand{\barr}{\begin{array}}
\newcommand{\earr}{\end{array}}
\newtheorem{thm}{Theorem}[section]
\newtheorem{lem}{Lemma}
\newtheorem{remark}{Remark}[section]
\newtheorem{assume}{Assumption}[section]
\newtheorem{prob}{Problem}[section]
\begin{document}

\title{\LARGE \bf Explicit Reduced-Order Integral Formulations of State and Parameter Estimation Problems for a Class of Nonlinear Systems}
\author{I.Yu. Tyukin, A.N. Gorban \thanks{I.Yu. Tyukin and A.N. Gorban  are with the University of Leicester, Department of Mathematics, University Road, LE1 7RH, Leicester, United Kingdom, e-mail:  {\tt\small I.Tyukin@le.ac.uk, ag153@le.ac.uk}}
}

\maketitle
\thispagestyle{empty}
\pagestyle{empty}

\begin{abstract} We propose a technique for reformulation of state and parameter estimation problems as that of matching explicitly computable definite integrals with known kernels to data. The technique applies for a class of systems of nonlinear ordinary differential equations and is aimed to exploit parallel computational streams in order to increase speed of calculations. The idea is based on the classical adaptive observers design. It has been shown that in case the data is periodic it may be possible to reduce dimensionality of the inference problem to that of the dimension of the vector of parameters entering the right-hand side of the model nonlinearly. Performance and practical implications of the method  are illustrated on a benchmark model governing dynamics of voltage in generated in barnacle giant muscle.
\end{abstract}

\section*{Notation}

Symbol $\|\cdot\|$ stands for the Euclidian norm. By
$\mathcal{K}$ we denote   the set of all strictly increasing
continuous functions $\kappa: \Real_{\geq 0}\rightarrow
\Real_{\geq 0}$ such that $\kappa(0)=0$. Consider a non-autonomous system $\dot{x}=f(x,p,t,u(t))$,  where $f:\Real^n\times\Real^d\times\Real\times\Real^l\rightarrow\Real^n$, $u:\Real\rightarrow\Real^l$
are continuous, $p\in\Real^d$ is the vector of parameters, and $f(\cdot,p,t,u)$ is locally Lipschitz; $x( \cdot \ ;t_0,x_0,p,[u])$ stands for
the unique maximal solution of the initial value problem: $x(t_0;t_0,x_0,p,[u])=x_0$. In cases when no confusion arises, we will refer to these solutions as $x(\cdot;t_0,x_0,[u])$,  $x(\cdot;x_0,[u])$,  or simply $x(\cdot)$. Solutions of the initial value problem above at $t$ are denoted as $x(t;t_0,x_0,p,[u])$,
$x(t;t_0,x_0,[u])$,  $x(t;x_0,[u])$,  or $x(t)$ respectively. Let $f:\Real\rightarrow\Real^n$, then $\|f(\tau)\|_{\infty,[t_0,t_0+T]}$ denotes the uniform norm of $f(\cdot)$ on $[t_0,t_0+T]$:  $\|f(\tau)\|_{\infty,[t_0,t_0+T]}=\ess \sup\{\|f(t)\|,t \in [t_0,t_0+T]\}$.

\section{Introduction}

Consider a system governed by nonlinear ordinary differential equations
\begin{equation}\label{eq:system0}
\dot{x}=f(x,p,t), \ x(t_0)=x_0,
\end{equation}
where $f:\Real^n\times\Real^k\times\Real\rightarrow\Real^n$ is \ continuous and locally Lipschitz wrt the variable $x$ function, and $p$ is the vector of unknown parameters. Let $[t_0,t_0+T]$ be an interval on which the solution $x(\cdot;t_0,x_0,p)$ of (\ref{eq:system0}) is defined. Let us further suppose that the system's state, $x(t;t_0,x_0,p)$, is not accessible for direct observation at any $t\in[t_0,t_0+T]$. One can, however, observe the values
\[
h(t,x(t;t_0,x_0,p)), \ h:\Real\times\Real^n\rightarrow\Real
\]
for every $t\in[t_0,t_0+T]$. Let the problem be to find  $p'\in \Real^k$, $x_0'\in\Real^n$ such that
\begin{equation}\label{eq:measurement:match}
\begin{array}{c}
h(t,x(t;t_0,x_0,p))=h(t,x(t;t_0,x_0',p'))\\
\mbox{for all}  \  t\in [t_0,t_0+T].
\end{array}
\end{equation}

This is a standard inverse problem, and many methods for finding solutions to this  problem have been developed to date (sensitivity functions \cite{SIAM:Ident}, splines \cite{Brewer:2008}, interval analysis \cite{Automatica:2008:Johnson},  adaptive observers \cite{Marino92},\cite{Besancon:2000}, \cite{Automatica:Farza:2009}, \cite{Automatica:Grip:2010},\cite{Tyukin:2011},\cite{Tyukin_2012:arx}  and particle filters and Bayesian inference methods \cite{Abarbanel:2009}). Despite these methods are based on different mathematical frameworks, they share a common feature: one is generally required to repeatedly  find numerical solutions of nonlinear ordinary differential equations (ODEs) over given intervals of time (solve the direct problem).

Notwithstanding the plausibility of numerical integration of systems of ODEs in algorithms for state and parameter estimation, this operation is an inherently sequential process. This constrains computational scalability of the problem, and as a result imposes limitations on the time required to derive a solution.  In order to overcome this limitation we propose  to cast the inverse problem above in an alternative, integral form. In particular, instead of finding numerical solutions of the initial value problem (\ref{eq:system0}) and matching the results to observed data, e.g. as (\ref{eq:measurement:match}), we search for a representation of the problem as
\begin{equation}\label{eq:integral_formulation_general}
\begin{split}
&y(t)-F(p,x_0,t)=0, \mbox{for all} \ t\in[t_0,t_0+T], \\
&F(p,x_0,t)=\int_{t_0}^t g(t,\tau,p,x_0)d\tau,
\end{split}
\end{equation}
where $g:\Real\times\Real\times\Real^k\times\Real^n\rightarrow \Real^d$ and $y:\Real\rightarrow \Real^d$ are functions that are explicitly computable from measurement data. Furthermore, we additionally require that if $p',x_0'$ is a solution of (\ref{eq:integral_formulation_general}) then it is also a solution of (\ref{eq:measurement:match}) and vise versa. 

In the next sections we  specify a class of systems for which such representation is possible. This class of systems is not as general as (\ref{eq:system0}) but is relevant enough in modelling applications. In Section \ref{sec:problem_statement} we define this class of systems and present general technical assumptions. This is followed by presentation of main results in Section \ref{sec:main_results}. The results are based on the periodicity assumption we impose on the data and also on known facts from the theory of adaptive observes \cite{Marino92},\cite{Lorea_2002}. In Section \ref{sec:example} we illustrate the approach with an example for state and parameter estimation of action potential models for neural membranes.

\section{Problem Formulation}\label{sec:problem_statement}

Consider the following class of systems
\begin{equation}\label{eq:1}
\begin{split}
\dot{x}&=A(\theta) x  + \Psi(y,t)\theta + v(y,q,\lambda,t)\\
\dot{q}&= P(y,\lambda,t) q + w(y,\lambda,t)\\
y&=C^{T}x, \ x(t_0)=x_0, \ q(t_0)=q_0,
\end{split}
\end{equation}
where $(x,q)$,  $x\in\Real^n$, $q\in\Real^d$ is the state vector,  $\theta\in\Real^m$, $\lambda\in\Real^p$ are parameters, $A(\theta)$ is an $n\times n$ real matrix, possibly dependent on $\theta$, and $C\in\Real^n$, $C=\col{1,0,\dots,0}$.  We assume that the following hold for (\ref{eq:1}):
\begin{assume}[General assumptions on (\ref{eq:1})] $\ \ \ \ \ \ \ \ \ \ $
\begin{itemize}
\item[A1)] the solution of (\ref{eq:1}) is defined on the interval $[t_0,t_0+T]$ (for some $T>0$, possibly dependent on $t_0$);
\item[A2)] the pair $A(\theta),C^T$, is  observable, that is
\[
\rank{\begin{array}{c}
                    C^{T}\\
                    C^T A(\theta)\\
                     \vdots\\
                     C^{T} A^{n-1}(\theta)
            \end{array}}=n;
\]
\item[A3)] $P(y,\lambda,t)$ and $\Psi(y,t)$ are  $d\times d$ and $n\times m$ real matrices of which the entries are continuous and differentiable functions; $P(y,\lambda,t)$ is diagonal:
    \[
    P(y,\lambda,t)=\mathrm{diag}\left(\alpha_1(y,\lambda,t),\dots,\alpha_d(y,\lambda,t) \right);
    \]
\item[A4)] $v:\Real\times\Real^d\times\Real^p\times\Real\rightarrow \Real^n$, $w:\Real\times\Real^p\times\Real\rightarrow\Real^d$ are continuous and differentiable functions.
\item[A5)] Exact values of parameters  $\theta$, $\lambda$ are unknown.
\end{itemize}
\end{assume}

Since the pair $A(\theta),C^T$ is observable there always is a coordinate transform $x\mapsto T(\theta,t,x)$, $q\mapsto q$ \cite{Marino92} rendering (\ref{eq:1}) into the following form
\begin{equation}\label{eq:system}
\begin{split}
\dot{x}&=A_0 x  + b \varphi(y,t)^{T} \tilde\theta(\theta) + \tilde{v}(y,q,\bar\lambda(\lambda,\theta),t)\\
\dot{q}&= \tilde P(y,\bar \lambda,t) q + \tilde w(y,\bar\lambda,t)\\
y&=C^{T}x, \ x(t_0)=x_0, \ q(t_0)=q_0,
\end{split}
\end{equation}
$b=(1,b_1,\dots,b_{n-1})^{T}$ is such that the polynomial $s^{n-1} + b_1s^{n-2}+\cdots+b_{n-1}$ is
Hurwitz, and $A_0=\left(\begin{array}{cc}0 & I_{n-1}\\
                           0 & 0\end{array}\right)$.
Functions
$\varphi:\Real\times\Real\rightarrow\Real^r$,
$\tilde{v}:\Real\times\Real^k\times\Real\rightarrow\Real^n$, $\tilde w: \Real\times\Real^k\times\Real\rightarrow \Real^d$, $\tilde P:\Real\times\Real^k\times\Real\rightarrow \Real^{d\times d}$ are
continuous and differentiable, $\tilde P$ is diagonal,  and $\tilde\theta\in\Real^r$,
$\bar\lambda\in\Real^k$ are parameters.

Furthermore, noticing that the variable $y$ is defined and known for all $t\in[t_0,t_0+T]$, and  $y(\cdot)$ is continuous one can express the solution $q(t;q_0,\bar\lambda,[y])$ on $[t_0,t_0+T]$ in the closed form as follows:
\[
\begin{split}
q(t;q_0,\bar\lambda,[y])&=e^{\int_{t_0}^t \tilde P(y(\tau),\bar\lambda,\tau)d\tau}q_0 +\\
&\int_{t_0}^t e^{\int_{\tau}^t \tilde P(y(s),\bar\lambda,s)ds} \tilde w(y(\tau),\bar\lambda,\tau) d\tau
\end{split}
\]
Denoting  $\tilde{\lambda}=\mathrm{col}(\bar\lambda,q_0)$, $g(y,\tilde{\lambda},t)=\tilde{v}(y,q(t;q_0,\bar\lambda,[y]),\bar\lambda,t)$ we therefore arrive at the transformed equations of (\ref{eq:system}):
\begin{equation}\label{eq:system1}
\begin{split}
\dot{x}&=A_0 x  + b \varphi(y,t)^{T} \tilde\theta(\theta) + g(y,\tilde{\lambda},t)\\
y&=C^{T}x, \ x(t_0)=x_0.
\end{split}
\end{equation}

The core problem we are interested in (\ref{eq:system1}) is as follows:
\begin{prob}\label{prob:inferrence} Let (\ref{eq:system1}) be given, and its solutions are defined on $[t_0,t_0+T]$. Suppose that all functions in the right-hand side of (\ref{eq:system}) are known, but  true values of $x_0$, $\tilde\theta$, $\tilde\lambda$ are unknown. Infer the values of $x(t_0), \tilde\theta(\theta), \tilde{\lambda}$ from the measurements of $y(t;t_0,x_0,\tilde{\theta},\tilde{\lambda})=C^{T}x(t;t_0,x_0,\tilde{\theta},\tilde\lambda)$ over $[t_0,t_0+T]$.
\end{prob}

 The question is if there is an equivalent integral formulation such as e.g. (\ref{eq:integral_formulation_general}) of this problem for (\ref{eq:system1})? If such an integral formulation exists then whether a reduced-complexity version of this formulation can be stated so that the dimension of the parameter vector in the reduced formulation is smaller that that of in the original problem? Answers to these questions are provided in the next section.

\section{Main Result}\label{sec:main_results}

\subsection{Indistinguishable parameterizations of (\ref{eq:system1})}

We begin with the following property of linear systems regarding input detectability (cf \cite{Tyukin_2012:arx})
\begin{lem}\label{lem:observer_inferrence}
Consider
\begin{equation}\label{eq:system_io}
\begin{split}
& \begin{array}{ll}
\dot{x}&=A x + u(t)+d(t),\\
y&=C^{T}x, \ x(t_0)=x_0, \ x_0\in\Real^n,
\end{array}
\end{split}
\end{equation}
where
\[
A=\left(\begin{array}{cc} \begin{array}{c}
                                a_1\\
                                \vdots\\
                                a_n
                          \end{array} & \begin{array}{c}
                                          I_{n-1}\\
                                           \\
                                            0
                                            \end{array}
         \end{array}\right), \ C=(1,0,\dots,0)^{T},
\]
and
$x, u,d:\Real\rightarrow\Real^n$, $u\in\mathcal{C}^1$,
$d\in\mathcal{C}$. Let $u(\cdot),\dot{u}(\cdot),d(\cdot)$ be bounded:
$\max\{\|u(t)\|,\|\dot{u}(t)\|\}\leq B, \
\|{d}(t)\|\leq \Delta_{\xi}$ for all $t\geq t_0$. Then the following hold:

\begin{itemize}
\item[1) ] if the solution of (\ref{eq:system_io}) is globally bounded for all $t\geq t_0$ then, for $T$ sufficiently large, there are $\kappa_1,\kappa_2\in\mathcal{K}$:
\[
\begin{split}
&\|y(\tau)\|_{\infty,[t_0,t_0+T]}\leq \varepsilon \Rightarrow  \ \exists \ t'(\varepsilon,x_0)\geq t_0: \\
&\left\|z_1(\tau) + u_1(\tau) \right\|_{\infty,[t',t_0+T]}\leq \kappa_1(\varepsilon) + \kappa_2(\Delta_{\xi}),
\end{split}
\]
where $z_1=(1,0,\dots,0) z$,
\begin{equation}\label{eq:system_io_filter}
\begin{split}
\dot z&= \Lambda  z +   G  u,\ \Lambda=\left(\begin{array}{ccc}
- b \  & \begin{array}{c}\vdots\\ \vdots \end{array} &
\begin{array}{c} I_{n-2}\\ 0
\end{array}\end{array}\right),\\
 G&=\left(\begin{array}{cc} - { b} & I_{n-1} \end{array}\right), \  z(t_0)=0,
\end{split}
\end{equation}
and  $ b=(b_1,\dots,b_{n-1})^{T}$:  real parts of the roots
of $s^{n-1}  + b_1 s^{n-2}+\cdots + b_{n-1}$ are negative.

\item[2)] if $ d(t)\equiv 0$, then $y(t)= 0$ for all $t\in[t_0,t_0+T]$ implies existence of $p\in\Real^{n-1}$
\begin{equation}\label{eq:identifiability}
(1,0,\dots,0)e^{\Lambda(t-t_0)} p+z_1(t)+u_1(t)=0
\end{equation}
for all  $t\in[t_0,t_0+T]$.
\end{itemize}
\end{lem}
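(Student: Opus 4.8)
The key observation is that system (\ref{eq:system_io}) is in observable canonical form, so the classical adaptive-observer construction (Marino, Kreisselmeier) applies: one introduces an auxiliary filter driven by the known input $u$ that, together with the output $y$, reconstructs the unmeasured components up to the transient of a Hurwitz matrix. Concretely, I would first verify that the filter variable $z$ in (\ref{eq:system_io_filter}) has the property that $x - \zeta$, for a suitable affine combination $\zeta$ of $y$, $z$ and $u$, obeys a linear ODE whose system matrix is similar to the companion matrix of the Hurwitz polynomial $s^{n-1}+b_1 s^{n-2}+\cdots+b_{n-1}$. This is the standard ``$K$-filter'' calculation: the state estimate is written as $\hat x = \eta + \text{(filtered output and input terms)}$, and the estimation error $e = x - \hat x$ satisfies $\dot e = \Lambda_b e$ with $\Lambda_b$ Hurwitz, so $\|e(t)\| \le M e^{-\gamma(t-t_0)}\|e(t_0)\|$ for some $M,\gamma>0$. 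The component we care about is $z_1 + u_1$, which I would show equals $-x_1 + (\text{error term})$ along any trajectory, where $x_1 = y$.

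For part 1, I would proceed as follows. By A1 and the boundedness hypotheses on $u,\dot u, d$ and the solution, all signals are bounded on $[t_0,t_0+T]$, so the filter output $z$ is bounded too. The error dynamics $\dot e = \Lambda_b e + (\text{terms linear in } d)$ give, by the variation-of-constants formula and Hurwitz-ness of $\Lambda_b$, a bound $\|e(t)\| \le M e^{-\gamma(t-t_0)}\|e(t_0)\| + \frac{M}{\gamma}\,\|G'\|\,\Delta_\xi$ for the disturbance-to-error gain, where $G'$ collects the coefficients multiplying $d$. Now if $\|y\|_{\infty,[t_0,t_0+T]} \le \varepsilon$, then along the canonical form $\dot x_i = x_{i+1} + a_i y + u_i + d_i$ one can bound the ``internal'' components $x_2,\dots,x_n$ in terms of $\varepsilon$, $B$, $\Delta_\xi$ and the exponentially decaying initial transient (this is exactly where observability, i.e. the companion structure, is used: small output plus the dynamics forces the state small after a settling time $t'$). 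Combining these two estimates, for $T$ large enough there is $t'(\varepsilon,x_0) \ge t_0$ after which $\|z_1(\tau)+u_1(\tau)\|_{\infty,[t',t_0+T]}$ is bounded by a $\mathcal{K}$-function of $\varepsilon$ plus a $\mathcal{K}$-function of $\Delta_\xi$; one reads off $\kappa_1,\kappa_2$ from the gains $M/\gamma$ and the output-to-state bound. The role of ``$T$ sufficiently large'' is precisely to let the exponential transient $M e^{-\gamma(t'-t_0)}\|e(t_0)\|$ fall below, say, $\kappa_1(\varepsilon)$, which fixes $t'$.

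For part 2, set $d \equiv 0$. Then the error equation is exactly $\dot e = \Lambda_b e$, with no forcing, so $e(t) = e^{\Lambda_b(t-t_0)} e(t_0)$, an honest exponential. The identity $x_1 = y$ together with the filter relation $\hat x_1 = (\text{combination of } y, z, u)$ and $e_1 = x_1 - \hat x_1$ yields $y(t) = (\text{affine in } z_1(t), u_1(t)) + (\text{row of } e^{\Lambda_b(t-t_0)})\,e(t_0)$. Setting $y(t) \equiv 0$ on $[t_0,t_0+T]$ and writing $p$ for the image of $e(t_0)$ under the change of basis that turns $\Lambda_b$ into $\Lambda$ as displayed in (\ref{eq:system_io_filter}), this is exactly the claimed relation $(1,0,\dots,0)e^{\Lambda(t-t_0)}p + z_1(t) + u_1(t) = 0$ for all $t \in [t_0,t_0+T]$.

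\textbf{Main obstacle.} The routine part is the variation-of-constants bookkeeping; the delicate part is the output-to-internal-state estimate in part 1 — quantifying how smallness of $y$ on a long interval, \emph{plus} the companion-form dynamics and the bounds on $u,\dot u$, forces $x_2,\dots,x_n$ (hence the whole reconstruction error) to be small after a computable settling time $t'$, with the dependence packaged into honest class-$\mathcal{K}$ functions. This is where the differentiability/boundedness of $u$ (not just $u$ itself) is consumed, since controlling $x_2 \approx \dot x_1 - a_1 y - u_1$ requires controlling $\dot u_1$, and iterating up the chain of integrators compounds this. I would isolate this as a preliminary sublemma (essentially a quantitative observability estimate for the chain of integrators) and then feed it into the error bound to conclude.
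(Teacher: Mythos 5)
Your overall architecture (an input-driven auxiliary filter, a Hurwitz error system, variation of constants, a transient that determines $t'$, and part 2 obtained by specializing to $d\equiv 0$, $y\equiv 0$) is the same as the paper's, and your part 2 is essentially the paper's argument; there $p=(x_2(t_0),\dots,x_n(t_0))$ is simply the initial value of the unmeasured substate, and note that $\Lambda$ is $(n-1)\times(n-1)$, so it is a reduced-order relation rather than a similarity transform of a full-order error matrix $\Lambda_b$. The genuine gap is in part 1, exactly at what you call the main obstacle, and the route you propose there would fail. Smallness of $y$ does \emph{not} force $x_2,\dots,x_n$ to be small: take $u_1\equiv c$ with $|c|\leq B$, $u_2=\dots=u_n\equiv 0$, $d\equiv 0$, $x_1(t_0)=0$, $x_2(t_0)=-c$, $x_j(t_0)=0$ for $j\geq 3$; then $y\equiv 0$ and the solution is bounded, yet $x_2\equiv -c$. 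For the same reason your claimed identity ``$z_1+u_1=-x_1+(\text{decaying error})$'' cannot hold along arbitrary trajectories: $z$ is driven by $u$ alone, so any exact relation necessarily carries $y$-, $e$- and $d$-driven terms. Writing $\tilde x=(x_2,\dots,x_n)^T$, $\tilde a=(a_2,\dots,a_n)^T$, $\tilde d=(d_2,\dots,d_n)^T$, $\tilde C=(1,0,\dots,0)^T\in\Real^{n-1}$ and $e:=x_2+u_1$, the relation that is actually available is $\dot{\tilde x}=\Lambda\tilde x+\tilde a\, y+Gu+b\,e+\tilde d$, so that $x_2(t)-z_1(t)$ equals the transient $\tilde C^Te^{\Lambda(t-t_0)}\tilde x(t_0)$ plus the filtered contributions of $y$, $e$, $\tilde d$, and $z_1+u_1=e-(x_2-z_1)$.

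Consequently everything hinges on showing that $\|y\|_{\infty,[t_0,t_0+T]}\leq\varepsilon$ forces the lumped input $e=x_2+u_1$ (not the state) to be small, and this quantitative ingredient is missing from your plan; your proposed sublemma targets a false statement, and bounding the state by $B$ could never yield the required bound $\kappa_1(\varepsilon)+\kappa_2(\Delta_\xi)$. The paper supplies it as Lemma \ref{lem:system_io_inferrence_1order}: for a scalar equation $\dot y=ky+u+d$ with $|u|,|\dot u|\leq B$, $|d|\leq\Delta_\xi$, one gets $\|u\|_{\infty}\leq\sqrt{\varepsilon}\,(1+e^{|k|\sqrt{\varepsilon}}+B)+\Delta_\xi$, proved not by observer bookkeeping but by a windowing/mean-value argument ($y(t)-y(t-L)e^{kL}=L\,e^{k(t-\tau')}(u(\tau')+d(\tau'))$, then choose $L=\sqrt{\varepsilon}$). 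Applied to $\dot y=a_1y+e+d_1$, where $\dot e$ is bounded precisely because the solution is assumed globally bounded and $u,\dot u,d$ are bounded (this is where those hypotheses are consumed), it gives $\|e\|\leq\upsilon_1(\varepsilon)+\upsilon_2(\Delta_\xi)$; only then does your variation-of-constants estimate close the bound on $z_1+u_1$, with the transient term fixing $t'(\varepsilon,x_0)$. Finally, no iteration up the chain of integrators is needed, nor possible: the conclusion concerns only $z_1+u_1$, iterating would require bounds on higher derivatives of $u$ that are not assumed, and the filter $z$ is what accounts for the remaining components.
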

Proof of Lemma \ref{lem:observer_inferrence} is provided in the Appendix.

According to Lemma \ref{lem:observer_inferrence} the following two sets of parameters, associated with every $\tilde\theta, \tilde\lambda$, need special consideration. The first set is defined as
\[
\begin{split}
&\mathcal{E}_0(\tilde{\theta},\tilde{\lambda},T)=\{(\theta',\lambda'), \ \theta'\in\Real^r,\lambda'\in\Real^{d+k} \ | \\
& b \varphi(y(t),t)^{T} (\theta'-\tilde\theta) + g(y(t),{\lambda}',t)-g(y(t),\tilde{\lambda},t)=0 \\
& \ \ \ \ \ \mbox{for all} \ t\in[t_0,t_0+T] \}.
\end{split}
\]
The set $\mathcal{E}_0(\tilde{\theta},\tilde{\lambda},T)$ contains all parameterizations of (\ref{eq:system1}) which are indistinguishable from each other providing that the values of  $x(t)$ are known for all $t\in[t_0,t_0+T]$. That is, if $x(t;t_0,x_0,\tilde\theta,\tilde\lambda)=x(t;t_0,x_0,\theta',\lambda')$ for all $t\in[t_0,t_0+T]$ then $(\theta',\lambda')\in\mathcal{E}_0(\tilde\theta,\tilde\lambda,T)$. Denote
\[
\begin{split}
&\eta(\tilde\theta,\tilde\lambda,\theta',\lambda',p,t)=\varphi(y(t),t)^{T} (\theta'-\tilde\theta) +  g_1(y(t),{\lambda}',t)\\
&-g_1(y(t),\tilde{\lambda},t)+\tilde{C}^T e^{\Lambda(t-t_0)}p + z_1(t;t_0,\lambda')-z_1(t;t_0,\tilde\lambda),
\end{split}
\]
where $\Lambda,\tilde C, z(t;t_0,\lambda')$ are defined as in (\ref{eq:system_io_filter}) with $u(t)$ replaced by $g(y(t),\lambda',t)$. The second set is defined as
\[
\begin{split}
&\mathcal{E}(\tilde\theta,\tilde\lambda,T)=\{(\theta',\lambda'), \ \theta'\in\Real^r,\lambda'\in\Real^{d+k} \ | \\ \exists \ &p(\tilde\theta,\tilde\lambda,\theta',\lambda')\in \Real^{n-1}: \\
 &\eta(\tilde\theta,\tilde\lambda,\theta',\lambda',p,t)=0 \ \mbox{for all} \ t\in[t_0,t_0+T] \}.
\end{split}
\]
In accordance with Lemma \ref{lem:observer_inferrence} the set $\mathcal{E}(\tilde\theta,\tilde\lambda,T)$ contains all parametrization of (\ref{eq:system1}) that are indistinguishable on the interval $[t_0,t_0+T]$ on the basis of accessing only the values of $y(x(t;t_0,x_0,\theta,\lambda))$. In other words, if $y(x(t;t_0,x_0,\tilde\theta,\tilde\lambda))=y(x(t;t_0,x_0',\theta',\lambda'))$ for all $t\in[t_0,t_0+T]$ then $(\theta',\lambda')\in\mathcal{E}(\tilde\theta,\tilde\lambda,T)$. If the set $\mathcal{E}(\tilde\theta,\tilde\lambda,T)$ contains more than one element then (\ref{eq:system1}) is not uniquely identifiable  on $[t_0,t_0+T]$ \cite{Distefano:1980}. Here, for simplicity, we will focus on systems (\ref{eq:system1}) that are uniquely identifiable on $[t_0,t_0+T]$:
\begin{assume}\label{assume:unique_ident} Sets $\mathcal{E}_0(\tilde\theta,\tilde\lambda,T)$ and $\mathcal{E}(\tilde\theta,\tilde\lambda,T)$ coincide and contain no more than one element.
\end{assume}

\subsection{Integral reduced-order formulation of the inverse problem for (\ref{eq:system1})}

Before we proceed with presenting an equivalent integral formulation of Problem \ref{prob:inferrence} let us first introduce several additional components and corresponding technical assumptions. Let $l\in\Real^n$ be a vector satisfying the following condition:
\[
P(A_0+l C^T)+(A_0+l C^T)^T P = -Q, \ Pb=C,
\]
where $P,Q$ are some symmetric positive definite matrices. According to the Meyer-Kalman-Yakubovich-Popov lemma, such vector will always exist since the polynomial $s^{n-1}+b_1 s^{n-2}+ \cdots + b_{n-1}$ is Hurwitz.

Consider
\begin{equation}\label{eq:error_dynamics}
\frac{d}{dt}\left(\begin{array}{c} \xi_1\\ \xi_2\end{array}\right)=\left(\begin{array}{cc} A_0+ l C^T & b\varphi(y(t),t)\\
                                    - \varphi(y(t),t) C^T & 0 \end{array}\right) \left(\begin{array}{c} \xi_1\\ \xi_2\end{array}\right),
\end{equation}
and let $\Phi(t,t_0)$ be its corresponding normalized fundamental solutions matrix: $\Phi(t_0,t_0)=I_{n+r}$.

\begin{thm}\label{theorem:integral_formulation} Consider (\ref{eq:system1}) and suppose that Assumption \ref{assume:unique_ident} holds. Let $y(\cdot)$, $\varphi(y(\cdot),\cdot)$, $g(y(\cdot),\lambda,\cdot)$ be $T$-periodic on $[t_0,\infty]$ for all $\lambda$, and the function $\varphi(y(\cdot),\cdot)$ satisfy:
\[
\int_{t_0}^{t_0+T}\varphi(y(\tau),\tau) \varphi(y(\tau),\tau)^{T}d\tau \geq \delta I_r, \ \delta >0.
\]

Then the following statements are equivalent
\begin{itemize}
\item [1)]  $\hat{y}(\lambda',t)=y(t)$ for all $t\in [t_0,t_0+T]$, where $\hat{y}:\Real^{d+k}\times\Real\rightarrow\Real$:
\begin{equation}\label{eq:integral_formulation_canonic}
\begin{split}
&\hat{y}(\lambda',t)=(1 \ 0 \ \dots \ 0)\big(\Phi(t,t_0)R(\lambda')+\Phi(t,t_0)\times\\
& \int_{t_0}^{t}\Phi(\tau,t_0)^{-1} \left(\begin{array}{c} g(y(\tau),\lambda',\tau)-ly(\tau)\\ y(\tau)\varphi(y(\tau),\tau) \end{array}\right)d\tau \big)\\
&R(\lambda')=(I_{n+r}-\Phi(t_0+T,t_0))^{-1}\Phi(t_0+T,t_0)\times\\
& \int_{t_0}^{t_0+T}\Phi(\tau,t_0)^{-1} \left(\begin{array}{c} g(y(\tau),\lambda',\tau)-l y(\tau)\\ y(\tau)\varphi(y(\tau),\tau) \end{array}\right)d\tau.
\end{split}
\end{equation}
\item [2)] $(1 \ 0  \ \cdots \ 0)x(t;t_0,x_0,\tilde\theta,\lambda')=y(t)$ for all $t\in [t_0,t_0+T]$.
\end{itemize}

Furthermore, the values of $x_0$, $\tilde\theta$ satisfy
\begin{equation}\label{eq:initial_conditions}
\left(\begin{array}{c}x_0\\
                    \tilde\theta
      \end{array}
\right)=R(\lambda').
\end{equation}

\end{thm}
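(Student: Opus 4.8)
The plan is to exhibit an adaptive-observer reformulation of~\eqref{eq:system1} whose output can be written as an explicit integral, and then to use $T$-periodicity together with the persistency-of-excitation condition on $\varphi$ to pin down the unknown initial data. First I would augment the state of~\eqref{eq:system1} with the standard regressor-filter variable $\xi_2\in\Real^r$ satisfying $\dot\xi_2=-\varphi(y(t),t)C^T\xi_1$, and set $\xi_1=x$; substituting the measured $y(t)$ for $C^Tx$ wherever it appears (so $\dot{\xi}_1 = A_0\xi_1 + lC^T\xi_1 - ly(t) + b\varphi(y(t),t)^T\tilde\theta + g(y(t),\tilde\lambda,t)$, using that $C^Tx=y$) turns the pair $(\xi_1,\xi_2)$ into a \emph{linear time-varying} system driven by the known signals $g(y(\cdot),\tilde\lambda,\cdot)-ly(\cdot)$ and $y(\cdot)\varphi(y(\cdot),\cdot)$, with system matrix exactly the one in~\eqref{eq:error_dynamics}. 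Hence $(\xi_1,\xi_2)(t)=\Phi(t,t_0)\,(\xi_1,\xi_2)(t_0)+\Phi(t,t_0)\int_{t_0}^t\Phi(\tau,t_0)^{-1}(\text{driving terms})\,d\tau$, and the true trajectory of~\eqref{eq:system1} is recovered by the choice $(\xi_1(t_0),\xi_2(t_0))=(x_0,\tilde\theta)$, since then $\xi_2(t)\equiv\tilde\theta$ is constant (as $C^T\xi_1=y$ forces $\dot\xi_2=-\varphi C^Ty\cdot 0$... more precisely the $\xi_2$-equation is consistent with the constant value that reproduces the $b\varphi^T\tilde\theta$ term) and $\xi_1$ solves~\eqref{eq:system1}.

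Next I would identify which initial vector $R$ makes $\hat y(\lambda',t):=(1\,0\cdots0)(\xi_1,\xi_2)(t)$ equal to a $T$-periodic signal. Writing $V(\lambda')=\int_{t_0}^{t_0+T}\Phi(\tau,t_0)^{-1}(g(y(\tau),\lambda',\tau)-ly(\tau),\,y(\tau)\varphi(y(\tau),\tau))^T\,d\tau$, periodicity of the solution over one period requires $(\xi_1,\xi_2)(t_0+T)=(\xi_1,\xi_2)(t_0)$, i.e. $\Phi(t_0+T,t_0)R+\Phi(t_0+T,t_0)V(\lambda')=R$, which solves uniquely to $R(\lambda')=(I-\Phi(t_0+T,t_0))^{-1}\Phi(t_0+T,t_0)V(\lambda')$ provided $I-\Phi(t_0+T,t_0)$ is invertible; this last invertibility is where the persistency condition $\int\varphi\varphi^T\ge\delta I_r$ enters, since it guarantees (via the standard exponential-stability argument for PE adaptive systems, cf.~\cite{Marino92},\cite{Lorea_2002}) that $\Phi(t_0+T,t_0)$ has no eigenvalue equal to $1$. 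Substituting this $R(\lambda')$ back gives precisely formula~\eqref{eq:integral_formulation_canonic} for $\hat y(\lambda',t)$.

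It remains to establish the equivalence $1)\Leftrightarrow 2)$ and the identity~\eqref{eq:initial_conditions}. For $2)\Rightarrow 1)$: if $(1\,0\cdots0)x(t;t_0,x_0,\tilde\theta,\lambda')=y(t)$ on $[t_0,t_0+T]$, then by $T$-periodicity of $y$, $\varphi(y(\cdot),\cdot)$ and $g(y(\cdot),\lambda',\cdot)$ the driving signals of the $(\xi_1,\xi_2)$ system are $T$-periodic, and the trajectory with $\xi_1(t_0)=x_0$, $\xi_2(t_0)=\tilde\theta$ reproduces $x$ and a constant $\tilde\theta$, hence is itself $T$-periodic; by uniqueness of the periodic initial vector derived above, $(x_0,\tilde\theta)=R(\lambda')$, so $\hat y(\lambda',t)=(1\,0\cdots0)x(t)=y(t)$, which also proves~\eqref{eq:initial_conditions}. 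For $1)\Rightarrow 2)$: given $\hat y(\lambda',t)=y(t)$ on $[t_0,t_0+T]$, run the $(\xi_1,\xi_2)$ system from $R(\lambda')$; its first coordinate equals $y(t)$ by hypothesis, so the substitution $C^T\xi_1=y$ used in its construction is \emph{legitimate}, meaning $\xi_1$ actually solves $\dot\xi_1=A_0\xi_1+b\varphi(C^T\xi_1,t)^T\xi_2(t_0)+g(C^T\xi_1,\lambda',t)$ with $\xi_2$ constant — i.e. $\xi_1=x(\cdot;t_0,\xi_1(t_0),\xi_2(t_0),\lambda')$; reading off $\xi_1(t_0)$ and $\xi_2(t_0)$ from $R(\lambda')$ and invoking Assumption~\ref{assume:unique_ident} to match with $\tilde\theta$ gives statement~2). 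The main obstacle I anticipate is the $1)\Rightarrow 2)$ direction: one must argue carefully that equality of only the \emph{output} $\hat y(\lambda',t)=y(t)$ is enough to retroactively justify the $C^T\xi_1\mapsto y$ substitution along the whole interval (a fixed-point/consistency argument on $[t_0,t_0+T]$), and then that the resulting $\tilde\theta$-component is forced to coincide with the true $\tilde\theta$ — this is exactly where Assumption~\ref{assume:unique_ident} (coincidence and singleton-ness of $\mathcal E_0$ and $\mathcal E$) and the PE bound on $\varphi$ must be combined, and getting that combination right is the delicate step.
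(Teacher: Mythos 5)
Your construction is essentially the paper's: you build the same driven linear time-varying system (\ref{eq:estimates}) with system matrix (\ref{eq:error_dynamics}), use the persistency-of-excitation condition only to get exponential stability of (\ref{eq:error_dynamics}) (hence invertibility of $I_{n+r}-\Phi(t_0+T,t_0)$ and existence/uniqueness of the attracting $T$-periodic solution), and recognize $R(\lambda')$ as the initial vector of that periodic solution. That part is correct. However, there is a genuine gap exactly where you flag it: in the direction 1) $\Rightarrow$ 2) you correctly observe that $\hat{y}(\lambda',\cdot)=y(\cdot)$ forces $\chi_2$ to be constant and makes $\chi_1$ a true solution of (\ref{eq:system1}) with parameters $(\chi_2(t_0),\lambda')$ reproducing the output, but you never close the argument that $\chi_2(t_0)=\tilde\theta$ and $\lambda'=\tilde\lambda$. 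The paper closes it without any further use of the PE bound: output-indistinguishability of $(\chi_2(t_0),\lambda')$ from $(\tilde\theta,\tilde\lambda)$ places the pair in $\mathcal{E}(\tilde\theta,\tilde\lambda,T)$ by Lemma \ref{lem:observer_inferrence}, part 2) (this is what the filter identity (\ref{eq:identifiability}) and the sets $\mathcal{E}_0$, $\mathcal{E}$ are for), and Assumption \ref{assume:unique_ident} ($\mathcal{E}=\mathcal{E}_0$ a singleton, which necessarily is $\{(\tilde\theta,\tilde\lambda)\}$) then gives the equality; statement 2) follows because the true system reproduces $y$ by definition. So the ``delicate combination'' you anticipate is not PE plus Assumption \ref{assume:unique_ident} but Lemma \ref{lem:observer_inferrence}(2) plus Assumption \ref{assume:unique_ident}; to also get (\ref{eq:initial_conditions}) in this direction you still need $\chi_1(t_0)=x_0$, e.g. because two solutions with identical output and identical forcing differ by a solution of $\dot e=A_0e$ with $C^Te\equiv 0$, which vanishes by observability (the paper instead appeals to uniqueness of the periodic solution).

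A second, smaller hole is in your 2) $\Rightarrow$ 1) step: you assert that the augmented trajectory $(x(\cdot;t_0,x_0,\tilde\theta,\lambda'),\tilde\theta)$ is $T$-periodic, but periodicity of the output and of the forcing does not by itself imply periodicity of the full state. It does hold here, but only because of the chain structure of $A_0$ and $C$ (each component $x_{i+1}=\dot x_i-u_i$ is determined by derivatives of the periodic $y$ and the periodic input), and this needs to be said; alternatively, argue as the paper does that the difference between this trajectory and the periodic solution started at $R(\lambda')$ obeys (\ref{eq:error_dynamics}), decays exponentially, and that a $T$-periodic function ($\hat y-y$ under the periodic extension) converging to zero is identically zero.
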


\begin{proof} Let us first show that 1) $\Rightarrow$ 2). Recall (see e.g. \cite{Lorea_2002}) that assumptions of the theorem imply  existence of positive numbers $\rho,D>0$:
\[
\|\Phi(t,t_0')\|\leq D e^{-\rho(t-t_0')} \ \mbox{for all} \ t\geq t_0',  \ t,t_0'\in[t_0,\infty).
\]
Hence there are no zero eigenvalues of the matrix  $I_{n+r}-\Phi(t_0+T,t_0)$, and $(I_{n+r}-\Phi(t_0+T,t_0))^{-1}$ exists.

Consider $\chi=(\chi_1,\chi_2)$:
\begin{equation}\label{eq:estimates}
\begin{split}
& \frac{d}{dt}\left(\begin{array}{c} \chi_1\\ \chi_2\end{array}\right)= \left(\begin{array}{cc} A_0+ l C^T & b\varphi(y(t),t)\\
                                    - \varphi(y(t),t) C^T & 0 \end{array}\right) \left(\begin{array}{c} \chi_1  \\ \chi_2\end{array}\right)  \\
&+ \left(\begin{array}{c} g(y(t),\lambda',t)-ly(t)\\ y(t)\varphi(y(t),t) \end{array}\right)
\end{split}
\end{equation}
It is clear that solutions of (\ref{eq:estimates}) are defined for all $t\geq t_0$ providing that the definition of $y(\cdot)$, $g(y(\cdot),\lambda',\cdot)$, and $\varphi(y(\cdot),\cdot)$ are extended (periodically) on the interval $[t_0,\infty)$. Introduce the function $\zeta(\cdot)=(x(\cdot,t_0,x_0,\tilde\theta,\tilde\lambda),\tilde\theta)$ (in which the domain of the function $x(\cdot,t_0,x_0,\tilde\theta,\tilde\lambda)$ definition is extended to $[t_0,\infty)$), and consider the difference
\[
\xi=\chi-\zeta.
\]
Dynamics of $\xi$ satisfy (\ref{eq:error_dynamics}) with $\xi_1(t_0)=\chi_1(t_0)-x(t_0)$, $\xi_2(t_0)=\chi_2(t_0)-\tilde\theta$. Moreover, $\hat{y}(\lambda',t)=C^{T}\chi_1(t)$ for all $t \in[t_0,t_0+T]$ (or in $[t_0,\infty)$ if $\hat{y}(\lambda',\cdot)$ is periodically extended on $[t_0,\infty)$).

Let $\hat{y}(\lambda',t)\equiv y(t)$. This implies that $\chi_2-\tilde\theta = \mbox{const}$ for all $t\in[t_0,t_0+T]$. Hence according to Lemma \ref{lem:observer_inferrence} $(\chi_2(t_0),\lambda')$ belong to $\mathcal{E}(\tilde\theta,\tilde\lambda,T)$. Given that sets $\mathcal{E}(\tilde\theta,\tilde\lambda,T)$ and $\mathcal{E}_0(\tilde\theta,\tilde\lambda,T)$ coincide and contain just one element, $\tilde\theta,\tilde\lambda$, we conclude that $\chi_2(t_0)=\tilde\theta$, $\lambda'=\tilde\lambda$. 

Notice that  $\lim_{t\rightarrow \infty}\xi(t)=0$ for all $\chi(t_0)$, and that
\begin{equation}\label{eq:stable_periodic}
\begin{split}
&\Phi(t,t_0)R(\lambda')+\Phi(t,t_0)\times\\
& \int_{t_0}^{t}\Phi(\tau,t_0)^{-1} \left(\begin{array}{c} g(y(\tau),\lambda',\tau)-ly(\tau)\\ y(\tau)\varphi(y(\tau),\tau) \end{array}\right)d\tau \big)
\end{split}
\end{equation}
is the unique exponentially stable periodic solution of (\ref{eq:estimates}). This implies that (\ref{eq:initial_conditions}) holds.

Let us show that 2) $\Rightarrow$ 1). Let $\tilde\theta,\lambda'$ be parameters for which the following identity folds $y(x(t;t_0,x_0,\tilde\theta,\lambda'))=y(t)$ for all $t\in[t_0,t_0+T]$. Consider the function $\zeta(\cdot)$ defined earlier.  Given that (\ref{eq:stable_periodic}) is the unique exponentially stable periodic solution of (\ref{eq:estimates}),  that $\lim_{t\rightarrow\infty}\zeta(t)=0$ for arbitrary choice of initial conditions (i.e. vectors $\tilde\theta$, $x(t_0)$, and $\chi_1(t_0)$, $\chi_2(t_0)$) and that $\zeta(t)\equiv 0$ if $\chi_1(t_0)=x_0$, $\chi_2(t_0)=\tilde\theta$,  one concludes that $\hat{y}(\lambda',t)=y(x(t;t_0,x_0,\tilde\theta,\lambda'))=y(t)$ for all $t\in[t_0,t_0+T]$.
\end{proof}

\begin{remark} One may argue that it is, in principle, possible to obtain integral formulations of the corresponding inverse problem without using adaptive observer-inspired structures. Note, however, that since the original matrix $A(\theta)$ is allowed to depend on unknown parameters $\theta$, explicit expressions of solutions of (\ref{eq:system}) will involve extra nonlinearly parameterized terms, $e^{A(\theta)(t-t_0)}$. If closed-form expressions are applied to (\ref{eq:system1}) then the drawback is that the overall unknown parameters vector is $(x_0,\tilde\theta,\tilde\lambda)$, and its dimension is $n+r+d+k$. In the proposed solution  dimension of the unknown parameters vector is reduced to $d+k$ which is advantageous for systems with large number of unknowns.
\end{remark}

\begin{remark} The uncertainty reduction achieved in the proposed method is due to the assumption that all functions in the right-hand side of (\ref{eq:system1}) are $T$-periodic. Whereas such periodicity assumptions may not always hold, they are not particularly difficult to satisfy (at least approximately) in the laboratory conditions.
\end{remark}

\begin{remark}\label{rem:discrete} Instead of dealing with continuous-time signals, $y(t)$, one may re-formulate the above results for data sampled at an $N$ discrete points $\{t_i\}$ in $[t_0,t_0+T]$. In this case sets $\mathcal{E}_0$, $\mathcal{E}$ will need to be re-defined so that the corresponding identities hold at a finite number of points $\{t_i\}$ rather than for all $t\in[t_0,t_0+T]$. Discrete extension of the theorem allows straightforward formulation of the inference problem as
\begin{equation}\label{eq:discrete}
\tilde\lambda=\arg \min_{\lambda\in\Real^r} \sum_{i=1}^N (\hat{y}(\lambda,t_i)-y(t_i))^2
 \end{equation}
which bears some similarity with \cite{Kuhl_2011,Pavlov_2013}. Here, however, no discretization of the original continuous-time dynamical model is required and  ${\pd \hat y(\lambda,t_i)}/{\pd \lambda}$ are computable as definite integrals.
\end{remark}

\section{Example}\label{sec:example}

Consider the following system:
\begin{equation}\label{eq:Morris-Lecar}
\begin{split}
\dot{x}=&-g_{Ca}m_{\infty}(x)(x-E_{Ca})-g_{K}q(x-E_K)\\
&-g_L(x-E_L)+I\\
\dot{q}=&-\frac{1}{\tau(x)}q+\frac{w_\infty(x)}{\tau(x)},\\
y=&x,
\end{split}
\end{equation}
where
\[
\begin{split}
m_\infty(x)&=0.5\left(1+\tanh\left(\frac{x-V_1}{V_2}\right)\right)\\
w_\infty(x)&=0.5\left(1+\tanh\left(\frac{x-V_3}{V_4}\right)\right)\\
\tau(x)&=T_0 \left(\cosh\left(\frac{x-V_3}{2V_4}\right)\right)^{-1}
\end{split}.
\]
Equations (\ref{eq:Morris-Lecar}) model dynamics of voltage oscillations generated in barnacle giant muscle fiber \cite{Morris_Lecar}. Variable $x$ is the measured voltage, $q$ is the recovery variable. The values of $E_{Ca}$, $E_K$, $E_L$ are normally known ($E_{Ca}=-100$, $E_K=70$, $E_L=50$); other parameters may vary from one cell to another.

It is clear that equations (\ref{eq:Morris-Lecar}) are of the form (\ref{eq:system}). Moreover, if the model operates in the oscillatory regime then the right-hand side is periodic in $t$, including the variable $q$. In addition the integral
\[
\int_{t_0}^{t_0+T}-\frac{1}{\tau(x(s))}ds < 0,
\]
where if $T$ is the period of oscillations, for practically relevant values of $T_0,V_3,V_4$. Assuming that observations are taking place when the system's solution are on (or sufficiently near) the stable period orbit we can express the variable $q(t)$  as follows:
\[
\begin{split}
q(t)=&e^{\int_{t_0}^{t}-\frac{1}{\tau(x(s))}ds} q_0 + \int_{t_0}^{t} e^{\int_{z}^{t}-\frac{1}{\tau(x(s))}ds}\frac{w_\infty(x(z))}{\tau(x(z))}dz\\
q_0=& (1-e^{\int_{t_0}^{t_0+T}-\frac{1}{\tau(x(s))}ds})^{-1}\times\\
&\int_{t_0}^{t_0+T} e^{\int_{z}^{t}-\frac{1}{\tau(x(s))}ds}\frac{w_\infty(x(z))}{\tau(x(z))}dz.
\end{split}
\]
This brings equations (\ref{eq:Morris-Lecar}) into the form (\ref{eq:system1}) with parameters $\tilde\theta=(g_L,I)$, and $\tilde\lambda=(V_1,V_2,V_3,V_4,T_0,g_{Ca},g_K)$.

For the purpose of illustration we set the values of parameters $\tilde\theta$, $\tilde\lambda$ as specified in Table \ref{table:parameters}.
\begin{table}
\caption{True (first row) and Estimated (second) parameter values of (\ref{eq:Morris-Lecar})}\label{table:parameters}
\label{table_example}
\begin{center}
Vector $\tilde\lambda=(V_1,V_2,V_3,V_4,T_0,g_{Ca},g_K)$\\
\vspace{2mm}
\begin{tabular}{|c|c|c|c|c|c|c|}
\hline
 $V_1$ & $V_2$ & $V_3$ & $V_4$ & $T_0$ & $g_{Ca}$ & $g_K$\\
\hline
 $1$ & $15$ & $-10$ & $14.5$ & $3$ & $-1.1$ & $2$\\
\hline
 $0.95$ & $15.08$ & $-10.15$ & $14.44$ & $3.04$ & $-1.12$ & $2.02$\\
\hline
\end{tabular}\\
\vspace{5mm}
Vector $\tilde\theta=(g_L,I)$\\
\vspace{2mm}
\begin{tabular}{|c|c|}
\hline
$g_L$  & $I$ \\
\hline
 $-0.5$  & $10$  \\
\hline
 $-0.539$  & $10.65$ \\
\hline
\end{tabular}
\end{center}
\end{table}
For the data generated at these parameter values the system is uniquely identifiable, and hence Assumption \ref{assume:unique_ident} holds. According to Theorem \ref{theorem:integral_formulation}, the problem of finding the values of $\tilde\theta,\tilde\lambda$ can be now formulated as that of matching the function $\hat{y}(\lambda',t)$ defined in (\ref{eq:integral_formulation_canonic}) to $y(t)$ over $[t_0,t_0+T]$. And in view of Remark \ref{rem:discrete} it reduces to
solving the unconstrained program (\ref{eq:discrete}).

In order to evaluate $\hat{y}(\lambda',t)$, as a function of parameter $\lambda'$ at a given $t$ one needs to know the fundamental solutions matrix $\Phi(t,t_0)$ for all $t\in[t_0,t_0+T]$. In this example this matrix was constructed numerically (using Dormand-Prince method and with fixed step size $0.0002$) from linearly independent solutions of
\begin{equation}\label{eq:example_fundamental}
\dot{z}=\left(\begin{array}{ccc}-l  & y(t) & 1\\
                                   -y(t) & 0 & 0\\
                                   -1 & 0 & 0  \end{array}\right)z, \ l=1
\end{equation}
starting from $(1,0,0)^{T}$, $(0,1,0)^{T}$, and $(0,0,1)^{T}$.

Points $t_i$ in (\ref{eq:discrete}) were evenly spaced with $t_{i+1}-t_i=0.04$, and the BFGS quasi-Newton method was used to find a numerical estimation of the solution of (\ref{eq:discrete}). For computational convenience, instead of looking for $V_1,V_2$ directly we were estimating ratios $1/V_2$ and $V_1/V_2$ respectively. Similarly, as follows from (\ref{eq:example_fundamental}), the estimate of parameter $I$ is not the value of $\tilde{\theta}_2$ but rather is the sum $\tilde{\theta}_2-\tilde{\theta}_1 50$.
We run the method for $12000$ iterations, and results  of the estimation are shown in Table \ref{table:parameters} and Fig. \ref{fig: estimates}.
\begin{figure}
\centering
\includegraphics[width=0.85\linewidth]{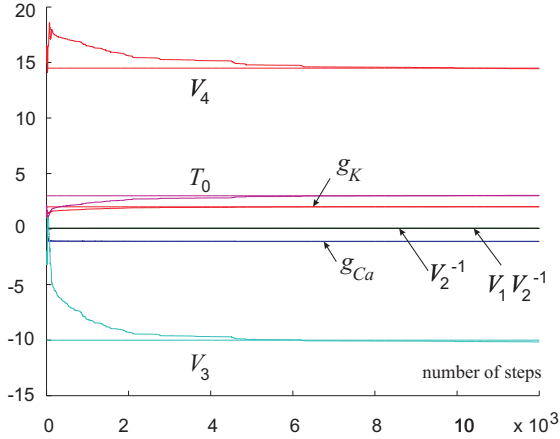}
\caption{Estimates and true values of $g_L$, $I$, $V_1/V_2$, $1/V_2$, $V_3$, $V_4$, $T_0$, $g_{Ca}$, $g_K$}\label{fig: estimates}
\end{figure}
In order to verify the quality of parameter estimation we run (\ref{eq:Morris-Lecar}) with both estimated and true values of parameters. Results of this simulation are show in Fig. \ref{fig:M-L-trajectoris}, upper panel. Note that frequency of the estimated $x(t)$ is higher than that of the measured data. This explains noticeable difference between trajectories at the end of the interval. In order to compensate for this difference we adjusted parameter $\tilde{\theta}_2$ (regulating the frequency of oscillations in the original model) by $-0.07$. Simulated trajectory of (\ref{eq:Morris-Lecar}) after this adjustment is shown in Fig. \ref{fig:M-L-trajectoris}, lower panel.
\begin{figure}
\centering
\includegraphics[width=0.95\linewidth]{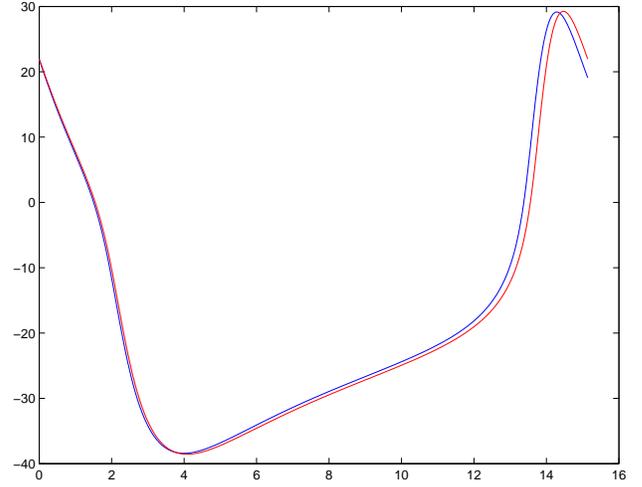}

\vspace{5mm}

\includegraphics[width=0.95\linewidth]{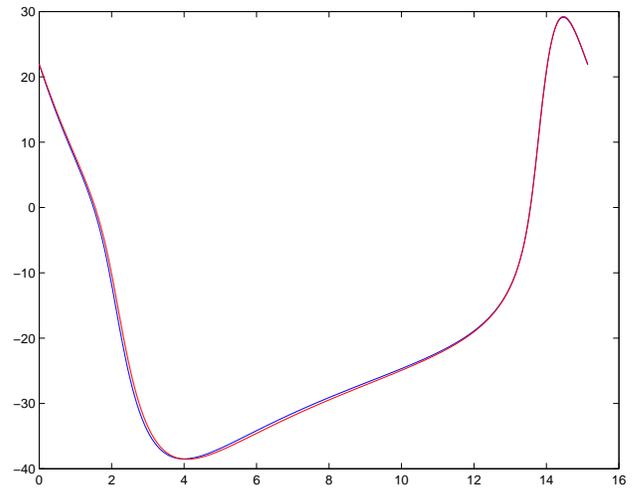}
\caption{Trajectories $x(t)$ of (\ref{eq:Morris-Lecar}) with true values of $\tilde\theta,\tilde\lambda$ (red curves) and estimated values of $\tilde\theta,\tilde\lambda$ from Table \ref{table:parameters} (blue curves). The upper panel shows the case when no adjustments to estimated parameters were made. The lower panel illustrates how the reconstructed $x(t)$ changes when the parameter $\tilde{\theta}_2$ regulating the frequency of oscillations is slightly adjusted by $-0.07$.}\label{fig:M-L-trajectoris}
\end{figure}
It is worth noticing that even though both estimated and simulated $x(t)$ are matching reasonably well there are still errors. The origin of these errors is likely to be 1) due to numerical errors in estimating the matrix $\Phi(t,t_0)$, and 2) due to the ill-conditioning of the original problem. Indeed, as Fig. \ref{fig:M-L-ill} suggests, there is a long shallow valley in a vicinity of the optimum.
\begin{figure}
\centering
\includegraphics[width=0.95\linewidth]{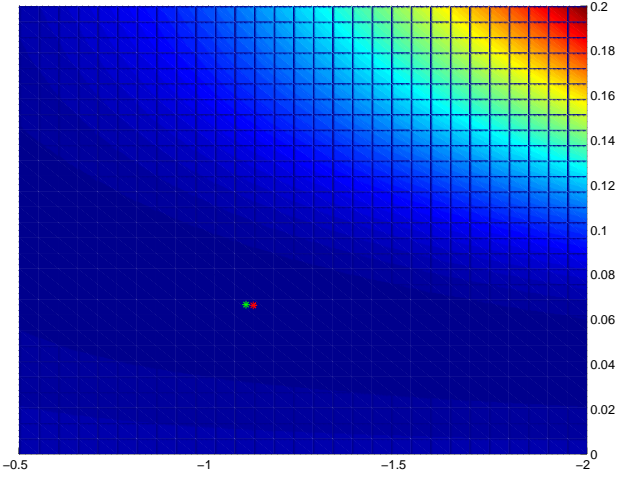}
\caption{Estimation error $\sum_{i=1}^N (\hat{y}(\tilde\lambda,t)-y(t))^2$ plotted as a function of $1/V_2$, $g_{Ca}$. Red star marks estimated $(g_{Ca},1/V_2)$, and green star corresponds to the true values of $(g_{Ca},1/V_2)$  }\label{fig:M-L-ill}
\end{figure}

The estimation took approximately $1$ hour on a standard PC in MATLAB. We observed that most of the time was spent in the calculations of $\frac{\pd \hat{y}}{\pd \tilde\lambda}$ which is not surprising given the integration (\ref{eq:integral_formulation_canonic}) was performed over a relatively dense and uniform grid of points. On the other hand, this indicates that in this and similar cases scalability of the procedure is expected to grow nearly linear with dimension of $\tilde\lambda$. This will be tested in experiments in future.

\section{Conclusion}

We presented a technique for explicit reduced-order integral reformulation of inverse problems for a class of nonlinear systems. The technique is aimed at using parallel computational streams and is based on the ideas of adaptive observers. It has been shown that the method allows to reduce dimensionality of the problem to that of the dimension of the vector of parameters entering the right-hand side of the model nonlinearly. In order to test the viability of the method a benchmark model governing dynamics of voltage in generated in barnacle giant muscle fiber has been chosen. The method performed well in this problem which, if coupled with inherent scalability of the procedure, enables to hope that the very same inference technology can be used  successfully for efficient fitting of other models to data too.

\bibliographystyle{plain}
\bibliography{theory_for_parallel}

\appendix
\section{Appendix}\label{app:proofs}

\begin{lem}\label{lem:system_io_inferrence_1order} Consider $\dot{y}=k y + u(t)+d(t)$, $k\in\Real$, $u,d:\Real_{\geq t_0}\rightarrow\Real$, $u\in\mathcal{C}^1$, $ d\in\mathcal{C}^{0}$, and let $\max\{|u(t)|,|\dot{u}(t)|\}\leq B$, $|d(t)|\leq \Delta_\xi$. Finally, let $T,\varepsilon$ be non-negative real numbers such that $T>\sqrt{\varepsilon}$. Then
\[
\begin{split}
&\|y\|_{\infty,[t_0,t_0+T]}\leq \varepsilon \Rightarrow  \\
& \|u\|_{\infty,[t_0,t_0+T)} \leq
\sqrt{\varepsilon}(1+e^{|k|\sqrt{\varepsilon}}+B)+\Delta_\xi.
\end{split}
\]
\end{lem}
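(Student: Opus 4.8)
The plan is to bound $|u(t')|$ at an arbitrary fixed $t'\in[t_0,t_0+T)$ and then take the supremum. The mechanism is a three-step chain: (i) since $y$ stays within $\varepsilon$ on any short window about $t'$, the mean value theorem forces $\dot y$ to be small somewhere in that window; (ii) reading $\dot y=ky+u+d$ off the equation at that point bounds $u$ there; (iii) the Lipschitz bound $|\dot u|\le B$ transports the estimate back to $t'$. The window is taken of length $\sqrt\varepsilon$, which balances the $\varepsilon/(\text{window length})$ contribution from step (i) against the $B\cdot(\text{window length})$ contribution from step (iii); this is the origin of the $\sqrt\varepsilon$ scaling, and the role of the hypothesis $T>\sqrt\varepsilon$ is precisely to guarantee such a window fits inside $[t_0,t_0+T]$.

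In detail, I would first dispatch the degenerate case $\varepsilon=0$: then $y\equiv 0$ on $[t_0,t_0+T]$, so $\dot y\equiv 0$, hence $u\equiv -d$ and $|u|\le\Delta_\xi$ on the interval, which is exactly the claimed bound. So assume $\sqrt\varepsilon>0$ and fix $t'\in[t_0,t_0+T)$. Using $T>\sqrt\varepsilon$, choose a closed subinterval $[t_1,t_2]\subseteq[t_0,t_0+T]$ with $t_2-t_1=\sqrt\varepsilon$ and $t'\in[t_1,t_2]$ — take $[t',t'+\sqrt\varepsilon]$ if $t'\le t_0+T-\sqrt\varepsilon$ and $[t_0+T-\sqrt\varepsilon,\,t_0+T]$ otherwise. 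Because $y\in\mathcal{C}^1$ (its derivative is the continuous function $ky+u+d$), the mean value theorem supplies $\xi\in(t_1,t_2)$ with $\dot y(\xi)=\big(y(t_2)-y(t_1)\big)/\sqrt\varepsilon$, whence $|\dot y(\xi)|\le 2\varepsilon/\sqrt\varepsilon=2\sqrt\varepsilon$ by $\|y\|_{\infty,[t_0,t_0+T]}\le\varepsilon$. Evaluating the equation at $\xi$ gives $u(\xi)=\dot y(\xi)-k\,y(\xi)-d(\xi)$, so $|u(\xi)|\le 2\sqrt\varepsilon+|k|\varepsilon+\Delta_\xi$; and since $|\dot u|\le B$ with $|t'-\xi|\le\sqrt\varepsilon$, we obtain $|u(t')|\le|u(\xi)|+B\sqrt\varepsilon\le 2\sqrt\varepsilon+|k|\varepsilon+B\sqrt\varepsilon+\Delta_\xi$. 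The final step is purely cosmetic: by $e^{x}\ge 1+x$ one has $|k|\varepsilon=\sqrt\varepsilon\cdot|k|\sqrt\varepsilon\le\sqrt\varepsilon\big(e^{|k|\sqrt\varepsilon}-1\big)$, so $2\sqrt\varepsilon+|k|\varepsilon\le\sqrt\varepsilon\big(1+e^{|k|\sqrt\varepsilon}\big)$, and therefore $|u(t')|\le\sqrt\varepsilon\big(1+e^{|k|\sqrt\varepsilon}+B\big)+\Delta_\xi$. As $t'\in[t_0,t_0+T)$ was arbitrary, the supremum bound follows.

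I do not anticipate a real obstacle — this is elementary real analysis — but two points deserve attention. The delicate bookkeeping is the geometric selection of a subinterval of length exactly $\sqrt\varepsilon$ inside $[t_0,t_0+T]$ that still contains a prescribed $t'$; this uses $T>\sqrt\varepsilon$ in an essential way and is why the conclusion is stated on the half-open interval $[t_0,t_0+T)$. The second is the realization that the raw estimate $2\sqrt\varepsilon+|k|\varepsilon+B\sqrt\varepsilon+\Delta_\xi$ should be repackaged via the convexity inequality $e^{x}\ge 1+x$ into the stated form, which is the shape used downstream (for instance when this estimate is fed, component by component, into the argument behind Lemma \ref{lem:observer_inferrence}).
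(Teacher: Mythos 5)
Your proof is correct, and it is essentially the same balancing idea as the paper's --- exploit smallness of $y$ over a window of length $\sqrt{\varepsilon}$ to bound $u$ at one point, then transport the bound with $|\dot u|\le B$ --- but the implementation differs in a way worth noting. The paper works from the variation-of-constants representation $y(t)=y(t-L)e^{kL}+\int_{t-L}^{t}e^{k(t-\tau)}(u(\tau)+d(\tau))\,d\tau$, applies the mean value theorem for integrals to produce the factor $e^{k(t-\tau')}$ (so the exponential in the final bound arises intrinsically), anchors the window at the left of each $t\ge t_0+L$, and then patches the initial segment $[t_0,t_0+\sqrt{\varepsilon}]$ separately at the cost of an extra $B\sqrt{\varepsilon}$, ending with the constant $2B$ there. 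You instead apply the derivative-form mean value theorem to $y$ on a window of length $\sqrt{\varepsilon}$ chosen to contain the arbitrary point $t'$ (which is where $T>\sqrt{\varepsilon}$ enters), read $u(\xi)=\dot y(\xi)-ky(\xi)-d(\xi)$ off the equation, and recover the exponential only through the cosmetic inequality $e^{x}\ge 1+x$. This is more elementary (no integral representation, no boundary-layer correction), it handles $\varepsilon=0$ explicitly, and it actually yields the constant $B$ uniformly on the whole interval, matching the lemma as stated, whereas the paper's own proof ends with $\sqrt{\varepsilon}\bigl(1+e^{|k|\sqrt{\varepsilon}}+2B\bigr)+\Delta_\xi$ on $[t_0,t_0+T]$; since the lemma is only used qualitatively (to produce class-$\mathcal{K}$ functions in Lemma \ref{lem:observer_inferrence}), either constant suffices downstream, but your version is the tighter one.
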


\begin{proof}
Let $L$ be an arbitrary element of $[0,T]$. Noticing that $y(t)$ for $t\geq t_0 + L$, $L>0$, can be expressed as:
$y(t)=y(t-L)e^{ k L}+\int_{t-L}^t
e^{k(t-\tau)}(u(\tau)+d(\tau))d\tau$ and using the Mean-value
theorem we obtain: $y(t)-y(t-L)e^{ k L}= L
e^{k(t-\tau')}(u(\tau')+d(\tau')), \ \tau'\in[t-L,t]$. Hence  $ \varepsilon(1+e^{kL})\geq L e^{k(t-\tau')}
(|u(t)|-L B-\Delta_{\xi})$, and
\[
\begin{split}
&\begin{array}{l}\Delta_\xi+LB+\frac{\varepsilon(1+e^{kL})}{L \min\{1,e^{kL}\}}\end{array} \geq\\
&\begin{array}{l}\Delta_\xi+LB+\frac{\varepsilon(1+e^{kL})}{L \min\{1,e^{k(t-\tau')}\}}\end{array}
\geq |u(t)|  \ \forall t\geq t_0+L.
\end{split}
\]
Given that $L$ can be chosen arbitrarily in the interval $[0,T]$ we let
$L=\sqrt{\varepsilon}$, and thus $|u(t)|\leq
\sqrt{\varepsilon}(1+e^{k\sqrt{\varepsilon}})\max\{1,e^{-k\sqrt{\varepsilon}}\}+B\sqrt{\varepsilon}
+\Delta_\xi \leq
\sqrt{\varepsilon}(1+e^{|k|\sqrt{\varepsilon}}+B)+\Delta_{\xi}
\ \forall \ t\in [t_0 + \sqrt{\varepsilon},t_0+T]$.

Finally, given that $|\dot{u}(t)|\leq B$ for all $t\in[t_0,t_0+T]$, including in the interval $[t_0,t_0+\sqrt{\varepsilon}]$, we conclude that
\[
|u(t)|\leq
\sqrt{\varepsilon}(1+e^{|k|\sqrt{\varepsilon}}+2B)+\Delta_{\xi}
\ \forall \ t\in [t_0,t_0+T].
\]

\end{proof}

\subsection{Proof of Lemma \ref{lem:observer_inferrence}}

Let us rewrite (\ref{eq:system_io}) as
\[
\begin{split}
\dot{y}&= a_{1} y + \tilde{C}\tilde x + u_1(t)+d_1(t)\\
\dot{\tilde x}&= \tilde A \tilde  x + \tilde  a y + b u_1 +  G  u(t)+\tilde{ d}(t),
\end{split}
\]
where $\tilde{a}=\mathrm{col}(a_2,\dots,a_n)$,
$\tilde C=\mathrm{col}(1,0,\dots,0)$,
$\tilde d(t)=\mathrm{col}(d_2(t),\dots,d_n(t))$, and
\[
G=\left(\begin{array}{cc} - {b} & I_{n-1}
\end{array}\right),  \
\tilde  A=\left(\begin{array}{cc}0 & I_{n-2}\\
                                0 & 0
                                \end{array}\right).
\]
Let $\|y(t)\|_{\infty,[t_0,t_0+T]}\leq \varepsilon$ and denote
$e(t)=\tilde C^{T}\tilde x+u_1(t)$.

According to Lemma
\ref{lem:system_io_inferrence_1order},  there are
$\upsilon_1,\upsilon_2\in\mathcal{K}$ such that
$\|e(t)\|=\|\tilde C^{T}\tilde x+u_1(t)\|\leq
\upsilon_1(\varepsilon)+\upsilon_2(\Delta_\xi)$ for all $t\in[t_0,t_0+T]$.

Using the notation above we obtain:
$\dot{\tilde x}=(\tilde  A -   b
\tilde C^{T})\tilde x + \tilde  a y(t) +
\tilde{ G} u(t) +   b e(t)+\tilde{ d}(t)$.

Matrix $\tilde  A -   b \tilde C^{T}=\Lambda$ is Hurwitz,
and hence there are $D,k\in\Real_{>0}$ such that
$\|e^{\Lambda(t-t_0)}\|\leq D e^{-k (t-t_0)}$.
Therefore $ \|\tilde  C^{T} \tilde x(t)-
\tilde C^{T}\int_{t_0}^{t}e^{\Lambda(t-\tau)} G
 u(\tau)d\tau\| \leq De^{-k(t-t_0)}\|\tilde{ x}(t_0)\|
+\frac{D}{k}(\| a\|\varepsilon
+\| b\|(\upsilon_1(\varepsilon)+\upsilon_2(\Delta_\xi))+\Delta_\xi)
$.

Noticing that
$z_1=\tilde C^{T}\int_{t_0}^{t}e^{\Lambda(t-\tau)} G
 u(\tau)d\tau$, denoting $\kappa(\varepsilon)=2
\frac{D}{k}(\| a\|\varepsilon
+\| b\|\upsilon_1(\varepsilon))+\upsilon_1(\varepsilon)$,
$\kappa_2(\Delta_\xi)=2
\frac{D}{k}(\Delta_\xi+\| b\|\upsilon_2(\Delta_\xi))+\upsilon_2(\Delta_\xi)$,
and
\[
t'(\varepsilon,x_0)= t_0 + \frac{1}{k}\ln\left(\frac{D\|x_0\|}{\varepsilon}\right)
\]
we can conclude that there is a $t'(\varepsilon,x_0) \geq t_0$ such that
\[
\begin{split}
\|z_1(\tau)+ u_1(\tau)\|_{\infty,[t,t_0+T]}&\leq \kappa(\varepsilon)+\varepsilon+\kappa_2(\Delta_\xi)\\
&=\kappa_1(\varepsilon)+\kappa_2(\Delta_\xi).
\end{split}
\]
for all $t\in [t'(\varepsilon,x_0),t_0+T]$, providing that $T$ is sufficiently large to satisfy $t_0+T > t'(\varepsilon,x_0)$.

Noticing that $y(t)\equiv 0 \Rightarrow e(t)\equiv0$ ensures that (\ref{eq:identifiability}) holds too. $\square$

\end{document}